\newtheorem{theo}{Theorem}[section]
\newtheorem*{theo*}{Theorem}
\newtheorem{coro}[theo]{Corollary}
\newtheorem{lemm}[theo]{Lemma}
\newcommand{\N}{\mathbb{N}}
\newcommand{\Z}{\mathbb{Z}}
\title[Menon-type identity]{A Menon-type identity using Klee's function }
\begin{document}
 
 \keywords{Euler totient function, generalized gcd, Jordan totient function, Klee's function}
 \subjclass[2010]{11A07, 11A25, 20D60, 20D99}
 \author[A Chandran]{Arya Chandran}
 \address{Department of Mathematics, University College, Thiruvananthapuram, Kerala - 695034, India}
 \email{aryavinayachandran@gmail.com}
 \author[N E Thomas]{Neha Elizabeth Thomas}
 \address{Department of Mathematics, SD College, Alappuzha, Kerala - 688003, India}
 \email{nehathomas2009@gmail.com}
\author[K V Namboothiri]{K Vishnu Namboothiri}
\address{Department of Mathematics, Government College, Ambalapuzha, Kerala - 688561, INDIA\\Department of Collegiate Education, Government of Kerala, India}
\email{kvnamboothiri@gmail.com}

 \begin{abstract}
  Menon's identity is a classical identity involving gcd sums and the Euler totient function $\phi$. A natural generalization of $\phi$ is the Klee's function $\Phi_s$. In this paper we derive  a Menon-type identity using Klee's function and a generalization of the gcd function. This identity generalizes an identity given by Lee and Kim in [\textit{J. Number Theory 175, 42--50(2017)}].
 \end{abstract}
 
 \maketitle
\section{Introduction}
The Euler totient function $\phi$ appears in many interesting identities in number theory. Probably because of its applications in various branches of number theory, it has been generalized in many ways. The Jordan function $J_s(n)$, the von Sterneck's function $H_s(n)$, the Cohen's function $\phi_s$ \cite{cohen1956some} and the Klee's function $\Phi_s$ \cite{klee1948generalization} are some important extensions of $\phi$ (see definitions in the next section). All these functions share several common properties. For example, Euler totient function $\phi$ holds a relation with the M\"{o}bius function. Similar relations are satisfied by all these generalizations.  All these generalizations have a product formulae in terms of the prime factorization of their  arguments. Hence all these are multiplicative and behave similar to  $\phi$ on prime powers.

Note that Cohen has proved in \cite{cohen1956some} the equality of the functions $J_s$, $H_s$ and $\phi_s$ though they are all defined differently. Klee's function $\Phi_s$ and Cohen's $\phi_s$ are connected by the relation $\phi_s(n) = \Phi_s(n^s)$. Thus $\Phi_s$ seems to be a natural generalization of $\phi$ (as for $s=1$, the former turns out to be equal to the latter). 

The classical Menon's identity which originally appeared in \cite{menon1965sum} is a gcd sum turning out to be equivalent to a product of the Euler function and the number of divisors function $\tau$. If $(m,n)$ denotes the gcd of $m$ and $n$, the identity is precisely the following:
\begin{align}\label{menons-identity}
\sum\limits_{\substack{m=1\\(m,n)=1}}^n (m-1,n)=\phi(n)\tau(n).
\end{align}
It has been generalized and extended by many authors. Many of the identities were derived using elementary number theory techniques. For example, in a recent paper, Zhao and Kao \cite{zhao2017another} suggested a generalization involving Dirichlet characters mod $n$ using elementary number theoretic methods. Their identity is
\begin{align*}
\sum\limits_{\substack{m=1\\(m,n)=1}}^n (m-1,n)\chi(m)=\phi(n)\tau\left(\frac{n}{d}\right),
\end{align*} 
where $\chi$ is a Dirichlet character mod $n$ with conductor $d$. When one takes $\chi$ as the principal character mod $n$, this identity turns to be equal to the Menon's identity. After this, a similar type of identity in terms of even functions mod $n$ was given by L. T{\'o}th \cite{toth2018menon}.  An arithmetical function $f$ is $n-$even (or even mod $n$) if $f(r)=f((r,n))$. T{\'o}th also used elementary number theory techniques and properties of arithmetical functions to prove his identity. K. N. Rao in \cite{rao1972certain} gave a generalization of the form
\begin{align*}
\sum\limits_{\substack{m_i \in U_k(n)}}(m_1-s_1, m_2-s_2\ldots,m_k-s_k,n)^k= J_k(n) \tau(n),
\end{align*}
where $U_k(n)$ is the unit group modulo $n$ and $s_i\in\Z$. He used Cauchy composition and finite Fourier representations to establish this result.

A different approach was used by B. Sury in \cite{sury2009some}. He used the method of group actions to derive the following identity.
\begin{align}\label{sury-identity}
\sum\limits_{\substack{1\leq m_1, m_2,\ldots,m_k\leq n\\(m_1,n)=1}}(m_1-1, m_2,\ldots,m_k,n)=\phi(n)\sigma_{k-1}(n),
\end{align} where $\sigma_k(n)=\sum\limits_{d|n}d^k$. C. Miguel in \cite{miguel2016menon} extended this identity of Sury from $\Z$ to any residually finite Dedekind domains. A furthur extension of Miguel's result  was given by Li and Kim in \cite[Theorem 1.1]{li2017menon}. For the case of $\Z$, their result  reads as follows \cite[Corollary 1.3]{li2017menon}:
\begin{align}\label{ide:li-kim-identity}
&\sum\limits_{\substack{ a_1,a_2,\ldots,a_s \in U(\Z_n)\\b_1,b_2 \ldots b_r \in \Z_n}}(a_1-1,\ldots,a_s-1,b_1,\ldots b_r,n) \\
&= \phi(n) \prod\limits_{i=1}^{m}(\phi(p_i^{k_i})^{s-1} p_i^{k_ir}-p_i^{k_i(s+r-1)}+\sigma_{s+r-1}(p_i^{k_i}))\nonumber 
\end{align} where $n = p_1^{k_1}p_2^{k_2}\ldots p_m^{k_m}$ is the prime factorization of $n$.  Note that this identity generalizes the classical Menon's identity and Menon-Sury identity.

Various other  generalizations of the Menon's identity were provided by many authors, see for example  \cite{haukkanen2005menon}, \cite{haukkanen1996generalization}, \cite{ramaiah1978arithmetical}, \cite{toth2011menon} and the more recent papers \cite{haukkanen2019menon} and \cite{toth2019short}. 

A natural question arising if one considers a generalization of the usual gcd function (which we define in the next section) in the place of the gcd function appearing in the Menon's identity (\ref{menons-identity}) what could be the possible change that can happen to this identity as well as the other generalizations of it. We propose a very natural generalization of the Li-Kim identity (\ref{ide:li-kim-identity}) involving generalized gcd function and Klee's function in this paper (which in turn generalizes the Menon's identity as well). We prove it using elementary number theory techniques.

 \section{Notations and basic results}
  Most of the notations, functions, and identities we use in this paper are standard and their definitions can be found in \cite{tom1976introduction}. For  a finite set $A$, by $\#A$ we mean the number of elements in $A$.

  The Jordan totient function $J_s(n)$ defined for positive integers $s$ and $n$ gives the number of ordered sets of $s$ elements from a complete residue system (mod $n$) such that the greatest common divisor of each set is prime to $n$ \cite[pp 95-97]{jordan1870traite}.  The  von Sterneck's function $H_s$ is defined as $$H_s(n) = \sum\limits_{\substack{n =[d_1,d_2,\cdots,d_s]}}\phi(d_1)\phi(d_2)\cdots\phi(d_s),$$ where the summation ranges over all ordered set of $s$ positive integers $d_1,d_2,\cdots,d_s$ with their least common multiple equal to $n$. Note that $[a, b]$ denotes the \emph{lcm} of integers $a,b$.

   For $m,n\in\N$, $(m,n)$ will denote the gcd of $m$ and $n$. Generalizing this notion, for positive integer $s$, integers $a,b$, not both zero, the largest $l^s$ (where $l\in \N$) dividing both $a$ and $b$ will be denoted by  $(a,b)_{s}$. Following Cohen \cite{cohen1956some} we call this function on $\N\times \N$ as the generalized gcd function. When $s=1$ this will be equal to the usual gcd function.  Like the gcd function, $(a,b)_s = (b,a)_s$. $a\in \N$ is said to be $s-$power free or $s-$free if no $l^s$ where $l\in \N$  divides $a$.
   
   The Cohen's function $\phi_s$ is defined as follows. If $(a,b)_s=1$, $a,b$ are said to be relatively $s-$prime. The subset $N$ of a complete residue system $M$ (mod $n^s$) consisting of all elements of $M$ that are relatively $s-$prime to $n^s$ is called an $s-$reduced residue system (mod $n$). The number of elements of an $s-$reduced residue system is denoted by $\phi_s(n)$.
   
    The functions $J_s(n)$ and $\phi_s(n)$  are the same \cite[Theorem 5 ]{cohen1956some} though their definitions look different. 
   
  Using the above  generalization of the gcd function,  for positive integers $s$ and $n$ the Klee's function  $\Phi_{s}(n)$ is defined 
   	 to give the cardinality of the  set $\{m\in\N : 1\leq m\leq n, (m,n)_s=1\}$.
   
  Note that $\Phi_1 = \phi$, the usual Euler totient function on $\mathbb{N}$. Some interesting properties of $\Phi_s$ are the following.
\begin{enumerate}
\item For $n,s \in\mathbb{N}$, $\Phi_{s}(n) =  \sum\limits_{d^{s}|n}\ \mu(d)  \frac{n}{d^{s}}$.
\item For $n,s \in\mathbb{N}$, $\Phi_{s}(n) =  n\prod\limits_{\substack{p^s|n\\p\text{ prime}}}(1-\frac{1}{p\textsuperscript{s}})$
where by convention, empty product is taken to be equal to 1.
\item $\Phi_s(p^{a}) =\begin{cases} 
 p^{a}-p^{a-s} \text{ if } a\geq s\\
 p^{a} \text{ otherwise}.
 \end{cases}$, where $p$ is prime and $a \in \mathbb{N}$.
\item $\Phi_s(n)$ is multiplicative in $n$.
\item $\Phi_s(n)$ is not completely multiplicative in $n$. 
 
 \item If $a$ divides $b$ and $(a,\frac{b}{a}) = 1$, then
  $\Phi_s(a)$ divides $\Phi_s(b)$.
 \item  For a prime $p$, $\Phi_s(p) = p$. So $\Phi_s(n)$ need not be even  where as $\phi(n)$ is even for $n>2$.
\item If $2^{s+1}$ divides $n$ or $2^{s-1}\mid n$ and $2^s\nmid n$, then $\Phi_s(n)$ is even.
\item If $p$ is an odd prime such that $p^{s}$ divides $n$, then $\Phi_s(n)$ is even.
\item If $n = 2^sa$, where $a$ is odd and $a$ is $s-$free, then $\Phi_s$ is odd.
\end{enumerate}
Many of the above properties are listed in \cite{klee1948generalization}. The rest can be verified easily via elementary techniques.

   By $\tau_{s}(n)$ where $s,n\in \N$, we mean the number of $l^s$ with $l\in \N$ dividing $n$. The function $\tau_{s}(n)$ is multiplicative in $n$, because for $(m,n)=1$, $\tau_s(mn) = \sum\limits_{\substack{d^s\mid mn}}1 = \sum\limits_{\substack{d_1^s\mid m}}1\sum\limits_{\substack{d_2^s\mid n}}1 = \tau_s(m)\tau_s(n) $. But $\tau_{s}(n)$ is not completely multiplicative as for example $m = p_1^sp_2$ and $n = p_2^{s-1}p_3^s$ gives $\tau_s(mn) \neq \tau_s(m) \tau_s(n).$ 
   The usual sum of divisors function can be generalized as follows:  for $k, s, n\in \N$ define $\sigma_{k,s}(n)$ to be the $k^{th}$ power sum of the $s^{th}$ power divisors of $n$. That is $\sigma_{k,s}(n) = \sum\limits_{\substack{d^s\mid n}}(d^s)^{k}$.
 Note that $\sigma_{k,s}(n) \neq \sigma_{ks}(n) $.
 
The principle of cross-classification   is about counting the  number of elements in  certain sets. Since we use it in our proofs, we state it below.
\begin{theo}\cite[Theorem 5.31]{tom1976introduction}
If $A_1, A_2,\ldots, A_n$ are given subsets of a finite set $A$, then 
\begin{align}\nonumber
\#(A-\bigcup\limits_{i=1}^n A_i ) =& \#A-\sum\limits_{1\leq i\leq n}\#A_i+\sum\limits_{1\leq i<j\leq n}\#(A_i\bigcap A_j)\\
&-\sum\limits_{1\leq i<j<k\leq n}\#(A_i\bigcap A_j\bigcap A_k)+\ldots\nonumber\\
&+(-1)^n\#(A_1\bigcap A_2\bigcap \ldots \bigcap A_n).\nonumber
\end{align}
\end{theo}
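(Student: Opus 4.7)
The plan is to prove the identity by a pointwise indicator-function argument. For every $x \in A$, I compare the number of times $x$ is counted on each side. On the left-hand side $x$ contributes $1$ if $x \notin \bigcup_{i=1}^n A_i$ and $0$ otherwise. On the right-hand side, classify $x$ by the integer $k := \#\{i : x \in A_i\}$, which lies in $\{0, 1, \ldots, n\}$; observe that $x$ belongs to $A_{i_1}\cap \cdots \cap A_{i_j}$ precisely when $\{i_1, \ldots, i_j\} \subseteq \{i : x \in A_i\}$, and the number of such $j$-subsets is $\binom{k}{j}$.

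Next I would sum the signed contributions over $j$. The total weight of a fixed $x$ on the right-hand side equals $\sum_{j=0}^{n} (-1)^j \binom{k}{j}$, where the $j=0$ term accounts for the initial $\#A$. Because $\binom{k}{j} = 0$ for $j > k$, this sum collapses to $\sum_{j=0}^{k}(-1)^j \binom{k}{j} = (1-1)^k$ by the binomial theorem, which equals $1$ when $k = 0$ and $0$ when $k \geq 1$. Thus the per-element contributions agree with the left-hand side in every case, and summing over all $x \in A$ produces the stated identity.

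There is no substantial obstacle here; the only point requiring care is handling the $j=0$ term uniformly with the others and noting the vanishing of $\binom{k}{j}$ for $j>k$ so that the binomial theorem applies. A purely inductive alternative would first reduce to $\#(A - \bigcup_{i=1}^n A_i) = \#A - \#\bigcup_{i=1}^n A_i$ and then prove the inclusion-exclusion formula for $\#\bigcup_{i=1}^n A_i$ by induction on $n$, using the identity $\#(B \cup A_n) = \#B + \#A_n - \#(B \cap A_n)$ with $B = \bigcup_{i=1}^{n-1} A_i$ and invoking the inductive hypothesis both on $B$ and on the family $\{A_i \cap A_n\}_{i=1}^{n-1}$. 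The pointwise argument is cleaner and avoids the bookkeeping of merging two expansions, so I would present that one.
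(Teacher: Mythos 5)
Your proof is correct: for each $x\in A$ with $k=\#\{i : x\in A_i\}$, the signed contribution to the right-hand side is $\sum_{j=0}^{k}(-1)^j\binom{k}{j}=(1-1)^k$, which is $1$ exactly when $k=0$, matching the left-hand side, and the handling of the $j=0$ term and of $\binom{k}{j}=0$ for $j>k$ is done properly. Note that the paper itself gives no proof of this statement — it is quoted directly as Theorem 5.31 of Apostol's book — so there is no internal argument to compare against; your pointwise double-counting argument is a complete, standard, and self-contained proof of the cross-classification (inclusion-exclusion) principle.
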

 \section{Main results and proofs}
We state below the main results we prove in this paper and provide the proof after that.

As a consequence of the principle of cross-classification, we prove the following.
\begin{theo}\label{count-relprime}
Let $n,d,s,r\in \N, d^s|n$. Let $(r, d^s)_s = 1$. Number of elements in $A=\{r+td^s: t=1,2,\ldots,\frac{n}{d^s}\}$ such that $(r+td^s, n)_s=1$ is $\frac{\Phi_s(n)}{\Phi_s(d^s)}$. 
\end{theo}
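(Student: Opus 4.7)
The plan is to apply the principle of cross-classification to $A$, using as the excluded subsets one set $A_p$ for each prime $p$ with $p^s\mid n$. Since $(a,n)_s=1$ is equivalent to requiring that no prime $p$ with $p^s\mid n$ satisfies $p^s\mid a$, the desired count equals $\#(A\setminus\bigcup_p A_p)$, where $A_p=\{r+td^s\in A : p^s\mid r+td^s\}$.

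\textbf{Step 1 (primes dividing $d$ are trivial).} If $p\mid d$ then $p^s\mid d^s\mid n$ and $p^s\mid td^s$ for every $t$, so $p^s\mid r+td^s$ would force $p^s\mid r$, contradicting $(r,d^s)_s=1$. Hence $A_p=\emptyset$ for every $p\mid d$, and any intersection involving such a prime may be discarded from the cross-classification sum.

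\textbf{Step 2 (counting general intersections).} For a squarefree $q=p_1\cdots p_k$ whose prime factors all satisfy $p_i^s\mid n$ and $p_i\nmid d$, the intersection $A_{p_1}\cap\cdots\cap A_{p_k}$ counts $t\in\{1,\dots,n/d^s\}$ with $q^s\mid r+td^s$. Coprimality of $q$ and $d$ makes $d^s$ invertible modulo $q^s$, pinning $t$ to a single residue class modulo $q^s$; and $p_i\nmid d$ together with $p_i^s\mid n$ force $q^s\mid n/d^s$, so the number of valid $t$ is exactly $n/(d^sq^s)$.

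\textbf{Step 3 (assembly).} Combining Steps 1 and 2 via the principle of cross-classification, and using $(-1)^{\omega(q)}=\mu(q)$ for squarefree $q$,
\begin{align*}
\#\{m\in A : (m,n)_s=1\}
\;=\; \frac{n}{d^s}\sum_{\substack{q\text{ squarefree}\\ p\mid q\,\Rightarrow\, p^s\mid n,\ p\nmid d}}\frac{\mu(q)}{q^s}
\;=\; \frac{n}{d^s}\prod_{\substack{p^s\mid n\\ p\nmid d}}\Big(1-\tfrac{1}{p^s}\Big).
\end{align*}
It remains to identify this with $\Phi_s(n)/\Phi_s(d^s)$. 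By property (2) of $\Phi_s$ listed in the preliminaries, and since the primes $p$ with $p^s\mid d^s$ are exactly the primes $p\mid d$ (each automatically satisfying $p^s\mid n$), the Euler factors indexed by $p\mid d$ cancel between $\Phi_s(n)/n$ and $\Phi_s(d^s)/d^s$, leaving precisely the product above. The main obstacle is the bookkeeping in Step 1: the hypothesis $(r,d^s)_s=1$ is used there in exactly the right way to kill the unwanted factors and make the remaining product line up with $\Phi_s(n)/\Phi_s(d^s)$.
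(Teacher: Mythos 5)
Your proof is correct and takes essentially the same approach as the paper: cross-classification over the sets $A_p$ for primes $p$ with $p^s\mid n$, counting each intersection by solving the congruence $td^s\equiv -r \pmod{q^s}$ (possible since $d^s$ is invertible there and $q^s\mid n/d^s$), and identifying the resulting Euler product with $\Phi_s(n)/\Phi_s(d^s)$ via the product formula for Klee's function. Your Step 1, discarding the primes dividing $d$, is exactly the paper's opening use of the hypothesis $(r,d^s)_s=1$, just organized as an explicit emptiness claim for those $A_p$.
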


\begin{theo}[Generalization of Li-Kim identity (\ref{ide:li-kim-identity})]\label{li-idnty}
Let $m_1,m_2,\cdots,m_k,\\ b_1,b_2,\cdots, b_r,n,s \in \N$ and $a_1,a_2,\cdots, a_k \in \Z$ such that $(a_i,n^s)_s=1$, $i = 1,2,\cdots,k$. Then
\begin{align}
\sum\limits_{\substack{1\leq m_1, m_2,\ldots,m_k\leq n^s\\(m_1,n^s)_s=1\\(m_2,n^s)_s=1\\\cdots \\(m_k,n^s)_s=1\\1\leq b_1, b_2,\ldots,b_r\leq n^s}}(m_1-a_1, m_2-a_2,&\ldots,m_k-a_k,b_1,b_2,\cdots,b_r,n^s)_s\\
&= \Phi_s(n^s)^k\sum\limits_{\substack{d^s|n^s}}\frac{(d^s)^r}{\Phi_s(\frac{n^s}{d^s})^{k-1}}\nonumber
\end{align}
\end{theo}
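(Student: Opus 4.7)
The plan is to adapt the standard Menon-identity strategy to the generalized-gcd setting, via three ingredients: an expansion of the generalized gcd in terms of values of $\Phi_s$, Theorem \ref{count-relprime} to count $s$-coprime residues in arithmetic progressions, and a change of variables at the end.

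First I would record the auxiliary identity $l^s=\sum_{e\mid l}\Phi_s(e^s)$, which is obtained by applying M\"obius inversion to property~(1) of $\Phi_s$ evaluated at the argument $l^s$. Because the generalized gcd $(x_1,\ldots,x_N,n^s)_s$ is by construction an $s$-th power, say $l^s$ with $l\mid n$, and because $e^s\mid l^s$ is equivalent to $e^s$ dividing each of $x_1,\ldots,x_N,n^s$, this rewrites as
\[
(x_1,\ldots,x_N,n^s)_s \;=\; \sum_{\substack{e\mid n\\ e^s\mid x_i\text{ for all }i}}\Phi_s(e^s).
\]
Applying this with $x_i=m_i-a_i$ for $i=1,\ldots,k$ and $x_{k+j}=b_j$ for $j=1,\ldots,r$, then interchanging the order of summation, reduces the left-hand side $S$ of the theorem to
\[
S \;=\; \sum_{e\mid n}\Phi_s(e^s)\,N_1(e)^{k}\,N_2(e)^{r},
\]
where $N_1(e)$ counts $m\in\{1,\ldots,n^s\}$ with $(m,n^s)_s=1$ and $m\equiv a_i\pmod{e^s}$, and $N_2(e)=\#\{b\in\{1,\ldots,n^s\}:e^s\mid b\}=n^s/e^s$. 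The factor $N_1(e)^{k}$ arises because the $k$ conditions on $m_1,\ldots,m_k$ are independent and yield identical counts (the particular value of $a_i$ changes only the residue class).

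To evaluate $N_1(e)$, I would first note $(a_i,e^s)_s=1$, which follows from $(a_i,n^s)_s=1$ together with $e^s\mid n^s$. Theorem \ref{count-relprime}, applied with $n$ replaced by $n^s$ and $d^s$ replaced by $e^s$, then gives $N_1(e)=\Phi_s(n^s)/\Phi_s(e^s)$. Substituting and pulling $\Phi_s(n^s)^{k}$ outside yields
\[
S \;=\; \Phi_s(n^s)^{k}\sum_{e\mid n}\frac{(n^s/e^s)^{r}}{\Phi_s(e^s)^{k-1}},
\]
and the substitution $d=n/e$ (so that $e^s=n^s/d^s$ as $d$ ranges over divisors of $n$) produces exactly the stated right-hand side. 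The main obstacle I anticipate is purely notational: one must be careful about the fact that the expansion $(x_1,\ldots,x_N,n^s)_s=\sum_{e}\Phi_s(e^s)$ relies on the gcd already being an $s$-th power, and one must verify that the hypothesis $(a_i,n^s)_s=1$ descends to $(a_i,e^s)_s=1$ for every $e\mid n$ so that Theorem \ref{count-relprime} is applicable. Once these two points are cleanly stated, the remainder of the argument is bookkeeping.
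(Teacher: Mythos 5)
Your proposal is correct and follows essentially the same route as the paper's proof: expand the generalized gcd as $\sum_{e}\Phi_s(e^s)$ over common $s$-th power divisors (the paper justifies $n^s=\sum_{d\mid n}\Phi_s(d^s)$ via the Jordan totient identity $n^s=\sum_{d\mid n}J_s(d)$ rather than by M\"obius inversion, but it is the same identity), interchange the order of summation, count the $m_i$ via Theorem \ref{count-relprime} and the $b_j$ directly, and finish with the substitution $d\mapsto n/d$. If anything, you are slightly more careful than the paper, which omits the verification that $(a_i,n^s)_s=1$ implies $(a_i,d^s)_s=1$ and misprints the congruence conditions as $m_i\equiv 1 \pmod{d^s}$ instead of $m_i\equiv a_i\pmod{d^s}$.
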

Note that the Menon-Sury identity (\ref{sury-identity}) is a special case of the above generalization with $k = 1, a_1=1$ and $s=1$.

We proceed to prove our results.  First we prove Theorem \ref{count-relprime}, which is essential in the proof of our generalization.

\begin{proof}[Proof of theorem \ref{count-relprime}]
 This result is a generalization of Theorem 5.32 appearing in \cite{tom1976introduction}. We use the same techniques used                           there to justify our claim.

We have to find the number of elements $r+td^s$ such that $(n,r+td^s)_s=1$. Hence we need to remove elements from $A$ that have $(r+td^s,n)_s>1$. If for an element $r+td^s$ of $A$, $p^s|n$ and $p^s|r+td^s$, then since $(r,n)_s=1$, $p^s\nmid d^s$. 
 Therefore the number we require is the number of elements in $A$ with $p^s|n$ and $p^s\nmid d^s$ for some prime $p$. Let these primes be $p_1,p_2,\ldots, p_m$. Write $l = p_1^sp_2^s\ldots p_m^s$.
 Let $A_{i} = \{x: x \in A$ and $p_{i}^{s}|x\}$, $i=1, 2,\cdots, m $. If $x\in A_{i}$ and $x=r+td^s$, then $r+td^s\equiv 0$ (mod $p_{i}^{s}$). This means that  $td^s\equiv -r(\text{mod }p_{i}^{s}$). Since $p_{i}^s\nmid d^s$ (which is if and only if $p_i \nmid d$), there is a unique $t$ mod $p_{i}^{s}$ satisfying this congruence equation. Therefore there exists exactly one $t$ in each of the intervals $[1, p_{i}^{s}], [p_{i}^{s}+1, 2p_{i}^{s}], \cdots, [(q-1)p_{i}^{s}+1, qp_{i}^{s}]$ where $qp_{i}^s = \frac{n}{d^s}$. 
 Therefore, $\#(A_{i})= q= \frac{n/d^s}{p_{i}^{s}}$.
 Similarly, $\#(A_{i}\bigcap A_{j})= \frac{n/d^s}{p_{i}^{s}p_{j}^{s}}$,
 $\cdots$,  $\#(A_{1}\bigcap A_{2}\bigcap \cdots \bigcap A_{m})= \frac{n/d^s}{p_{1}^{s}p_{2}^{s}\cdots p_{m}^{s}}$.

 Hence by cross classification principle, the number of elements we seek is equal to
\begin{align*}
 \#(A-\bigcup \limits _{i=1}^{m}A_{i}) =& \#(A)- \sum \limits_{i=1}^{m}\#(A_{i}) + \sum \limits_{1\leq i<j\leq m }\#(A_{i}\bigcap A_{j})- \cdots \\
 & + (-1)^m \#(A_{1}\bigcap A_{2} \bigcap \cdots \bigcap A_{m})\\
 &= \frac{n}{d^s} - \sum \frac{n/d^s}{p_{i}^{s}} + \sum \frac{n/d^s}{p_{i}^{s}p_{j}^{s}} + \cdots + (-1)^m \frac{n/d^s}{p_{1}^{s}p_{2}^{s}\cdots p_{m}^{s}}\\
 &= \frac{n}{d^s}\left(1 - \sum \frac{1}{p_{i}^{s}} + \sum \frac{1}{p_{i}^{s}p_{j}^{s}} + \cdots +   \frac{(-1)^m}{p_{1}^{s}p_{2}^{s}\cdots p_{m}^{s}}\right)\\
 &= \frac{n}{d^s} (1-\frac{1}{p_{1}^{s}})(1-\frac{1}{p_{2}^{s}})\cdots(1-\frac{1}{p_{m}^{s}}) \\
 &= \frac{n}{d^s} \prod \limits_{p^s|l}(1- \frac{1}{p^{s}})\\
 &= \frac{n}{d^s} \frac{\prod \limits_{p^s|n}(1- \frac{1}{p^{s}})}{\prod \limits_{p^s|d^s}(1- \frac{1}{p^{s}})}\\
 &= \frac{\Phi_s(n)}{\Phi_s(d^s)}.
 \end{align*}

\end{proof}

Next we prove the generalization we proposed.  Here we use elementary number theoretic techniques in the proof. Li and Kim used direct computations involving Dedekind domains to derive their identity. 

\begin{proof}[Proof of theorem \ref{li-idnty}]
We know that \cite[Section V.3]{sivaramakrishnan1988classical}  $n^s =  \sum\limits_{d|n}J_s(d)$ and $J_s(n) = \Phi_s(n^s)$. Hence $n^s =  \sum\limits_{d|n}\Phi_s(d^s)=\sum\limits_{d^s|n^s}\Phi_s(d^s)$. Now $(m_1-a_1, m_2-a_2,\ldots,m_k-a_k,b_1,b_2,\cdots,b_r,n^s)_s$ is an $s^{\text{th}}$ power of some natural number. So
\begin{align*}
&\sum\limits_{\substack{1\leq m_1, m_2,\ldots,m_k\leq n^s\\(m_1,n^s)_s=1\\(m_2,n^s)_s=1\\\cdots \\(m_k,n^s)_s=1\\1\leq b_1, b_2,\ldots,b_r\leq n^s}}(m_1-a_1, m_2-a_2,\ldots,m_k-a_k,b_1,b_2,\cdots,b_r,n^s)_s\\
&=\sum\limits_{\substack{m_1=1\\(m_1,n^s)_s = 1}}^{n^s}
	 \cdots \sum\limits_{\substack{m_k=1\\(m_k,n^s)_s = 1}}^{n^s}\sum\limits_{\substack{b_1=1}}^{n^s} \cdots \sum\limits_{\substack{b_r=1}}^{n^s}(m_1-a_1, m_2-a_2,\ldots,m_k-a_k,b_1,b_2,\cdots,b_r,n^s)_s\\
	 &=\sum\limits_{\substack{m_1=1\\(m_1,n^s)_s = 1}}^{n^s}
	 \cdots \sum\limits_{\substack{m_k=1\\(m_k,n)_s = 1}}^{n^s}\sum\limits_{\substack{b_1=1}}^{n^s} \cdots \sum\limits_{\substack{b_r=1}}^{n^s} \quad\sum\limits_{\substack{d^s \mid (m_1-a_1, m_2-a_2,\ldots,m_k-a_k,b_1,b_2,\cdots,b_r,n^s)_s}}\Phi_s(d^s)\\ 
	 & = \sum\limits_{\substack{d^S\mid n^s}}\Phi_s(d^s)\sum\limits_{\substack{b_1=1\\d^s\mid b_1}}^{n^s} \cdots \sum\limits_{\substack{b_r=1\\d^s\mid b_r}}^{n^s} \sum\limits_{\substack{m_1=1\\(m_1,n)_s = 1\\m_1\equiv 1(\text{mod }d^s)}}^{n^s}\cdots \sum\limits_{\substack{m_k=1\\(m_k,n^s) = 1\\m_k\equiv 1(\text{mod }d^s)}}^{n^s}1\\
	 & =  \sum\limits_{\substack{d^s\mid n^s}}\Phi_s(d^s)\sum\limits_{\substack{b_1=1\\d^s\mid b_1}}^{n^s} \cdots \sum\limits_{\substack{b_r=1\\d^s\mid b_r}}^{n^s}\Big(\frac{\Phi_s(n^s)}{\Phi_s(d^s)}\Big)^k\text{    (using Theorem \ref{count-relprime})}\\ 
	 & = \Phi_s(n^s)^k\sum\limits_{\substack{d^s\mid n^s}}\frac{1}{\Phi_s(d^s)^{k-1}}\sum\limits_{\substack{b_1=1\\d^s\mid b_1}}^{n^s} \cdots \sum\limits_{\substack{b_r=1\\d^s\mid b_r}}^{n^s}1\\& = \Phi_s(n^s)^k \sum\limits_{\substack{d^s\mid n^s}}\frac{1}{\Phi_s(d^s)^{k-1}}(\frac{n^s}{d^s})^{r}\\& = \Phi_s(n^s)^k \sum\limits_{\substack{d^s\mid n^s}}\frac{(d^s)^r}{\Phi_s(\frac{n^s}{d^s})^{k-1}},
\end{align*}
which completes the proof.
\end{proof}
We will now show that the above identity is indeed the same as  Li-Kim identity (\ref{ide:li-kim-identity}) when $s= a_i = 1$. For that, we need to show that the RHS of identities $(4)$ and (\ref{ide:li-kim-identity}) are equal as the LHS of our identity can be quickly seen to be equal to the LHS of (\ref{ide:li-kim-identity})  when $s= a_i = 1$. To prove that the RHS are also equal, we require the following identity.
\begin{lemm}
\begin{align}
\sum\limits_{\substack{d \mid  p^{t}}}d^r  \Big(\frac{\phi( p^{t})}{\phi(\frac{ p^{t}}{d})}\Big)^{k}&= \sum\limits_{\substack{j = 0}}^{t-1} p^{j(k+r)}+p^{t(k+r)}(1-\frac{1}{p})^k\nonumber
\end{align}
\begin{proof}
\begin{align*}
\sum\limits_{\substack{d \mid  p^{t}}}d^r  \Big(\frac{\phi( p^{t})}{\phi(\frac{ p^{t}}{d})}\Big)^{k}&
=\sum\limits_{\substack{j = 0}}^{t} (p^{j})^{r}\Big(\frac{\phi(p^t)}{\phi(p^{t-j})}\Big)^k\\
&=\sum\limits_{\substack{j = 0}}^{t-1} p^{jr}\Big(\frac{p^t(1-\frac{1}{p})}{p^{t-j}(1-\frac{1}{p})}\Big)^k+p^{tr}p^{tk}(1-\frac{1}{p})^k\\
&=\sum\limits_{\substack{j = 0}}^{t-1} p^{j(k+r)}+p^{t(k+r)}(1-\frac{1}{p})^k.
\end{align*}
\end{proof}
\end{lemm}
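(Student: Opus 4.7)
The plan is to reduce the left-hand side to a one-parameter sum by enumerating the divisors of $p^t$ explicitly. Since the divisors of $p^t$ are exactly $p^0, p^1, \ldots, p^t$, I substitute $d = p^j$ and rewrite the sum as
$$\sum_{j=0}^{t} p^{jr}\left(\frac{\phi(p^t)}{\phi(p^{t-j})}\right)^{k}.$$
This reduces the problem to a routine manipulation of prime-power values of $\phi$.

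Next I invoke the standard formula $\phi(p^m) = p^m(1 - 1/p)$, but with the caveat that this holds only for $m \geq 1$, while $\phi(1) = 1$. This discrepancy forces me to split off the top index $j = t$ (where $p^{t-j} = 1$) from the remaining terms. For $0 \le j \le t-1$, both numerator and denominator carry a factor $(1 - 1/p)$ which cancels, leaving $\phi(p^t)/\phi(p^{t-j}) = p^{j}$; so the contribution from these indices is $\sum_{j=0}^{t-1} p^{jr} \cdot (p^{j})^{k} = \sum_{j=0}^{t-1} p^{j(k+r)}$, which is exactly the first piece of the claimed right-hand side.

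For the boundary term $j = t$, the denominator $\phi(1) = 1$ blocks the cancellation, so the term reads $p^{tr}\,\phi(p^t)^{k} = p^{tr}\,p^{tk}(1 - 1/p)^{k} = p^{t(k+r)}(1 - 1/p)^{k}$, matching the second piece of the right-hand side. Adding the two contributions completes the proof. I do not anticipate any genuine obstacle here; the only point to watch is the separation of the $j = t$ boundary term, since treating $\phi(p^0)$ with the generic prime-power formula would silently introduce a spurious $(1-1/p)$ factor.
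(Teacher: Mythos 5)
Your proposal is correct and follows essentially the same route as the paper: enumerate divisors as $d=p^j$, split off the boundary term $j=t$ where $\phi(p^{t-j})=\phi(1)=1$, cancel the $(1-\frac{1}{p})$ factors for $j\le t-1$, and evaluate the top term directly. Your explicit remark about why the $j=t$ term must be separated (the generic formula $\phi(p^m)=p^m(1-\frac{1}{p})$ fails at $m=0$) is exactly the point the paper's computation handles silently.
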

Now we show what we claimed; that is 
$$\sum\limits_{\substack{d \mid n}}d^r  \Big(\frac{\phi(n)}{\phi(\frac{n}{d})}\Big)^{k-1} = \prod\limits_{i=1}^{q}\Big(\phi(p_i^{t_i})^{k-1} p_i^{t_ir}-p_i^{t_i(k+r-1)}+\sigma_{k+r-1}(p_i^{t_i})\Big)$$ where $n = p_1^{t_1}p_2^{t_2}\ldots p_q^{t_q}$.

Starting from the RHS, we have
\begin{align*}
&\prod\limits_{i=1}^{q}\Big(\phi(p_i^{t_i})^{k-1} p_i^{t_ir}-p_i^{t_i(k+r-1)}+\sigma_{k+r-1}(p_i^{t_i})\Big)\\&
 = \prod\limits_{i=1}^{q}\Big({p_i^{{t_i}(k-1)}} (1-\frac{1}{p_i})^{k-1}p_i^{t_ir}-p_i^{t_i(k+r-1)}+1+p_i^{k+r-1}+{p_i^{2(k+r-1)}}\\&+\ldots+{p_i^{(t_i-1)(k+r-1)}} +{p_i^{{t_i}(k+r-1)}}\Big)\\& 
 =\prod\limits_{i=1}^{q}\Big(1+p_i^{k+r-1}+{{p_i^{2(k+r-1)}}}+\ldots +{p_i^{{(t_i-1)}(k+r-1)}}+{p_i^{{t_i}(k+r-1)}} (1-\frac{1}{p_i})^{k-1}\Big)\\& 
 =\prod\limits_{i=1}^{q}\Big(\sum\limits_{\substack{j = 0}}^{t_i-1}{p_i^{{j}(k+r-1)}}+{p_i^{{t_i}(k+r-1)}}(1-\frac{1}{p_i})^{k-1}\Big)
 \\&= \prod\limits_{i=1}^{q}\sum\limits_{\substack{d_i \mid p_i^{t_i}}}d_i^r  \Big(\frac{\phi(p_i^{t_i})}{\phi(\frac{p_i^{t_i}}{d_i})}\Big)^{k-1}  \text{ (by identity (5))}\\&
 = \sum\limits_{\substack{d_1 \mid p_1^{t_1}}}d_1^r  \Big(\frac{\phi(p_1^{t_1})}{\phi(\frac{p_1^{t_1}}{d_1})}\Big)^{k-1} \sum\limits_{\substack{d_2 \mid p_2^{t_2}}}d_2^r  \Big(\frac{\phi(p_2^{t_2})}{\phi(\frac{p_2^{t_2}}{d_2})}\Big)^{k-1} \ldots \sum\limits_{\substack{d_q \mid p_q^{t_q}}}d_q^r  \Big(\frac{\phi(p_q^{t_q})}{\phi(\frac{p_q^{t_q}}{d_q})}\Big)^{k-1} \\&
 = \sum\limits_{\substack{d_1d_2\ldots d_q\mid  p_1^{t_1}p_2^{t_2}\ldots p_q^{t_q}}}(d_1d_2\ldots d_q)^r  \Big(\frac{\phi( p_1^{t_1}p_2^{t_2}\ldots p_q^{t_q})}{\phi(\frac{ p_1^{t_1}p_2^{t_2}\ldots p_q^{t_q}}{d_1d_2\ldots d_q})}\Big)^{k-1} \\&
 =  \sum\limits_{\substack{d \mid n}}d^r  \Big(\frac{\phi(n)}{\phi(\frac{n}{d})}\Big)^{k-1}.
\end{align*}
Therefore we  get Li-Kim identity (\ref{ide:li-kim-identity}) as a special case of our identity $(4)$ when $s=1$ and $a_i = 1$. Hence our identity also gives a generalization of the Menon-Sury identity which in turn is a generalization of the Menon's identity.

The following identities can be easily deduced from our result by giving special values to $a_i, b_i$ and $s_i$ and may be of independent interest. Note that the first one gives another generalization of the Li-Kim identity and it involves the usual gcd function.
\begin{coro}

\begin{enumerate}
\item 
\begin{displaymath}
 \sum\limits_{\substack{1\leq m_1, m_2,\ldots,m_k\leq n\\(m_1,n)=1\\(m_2,n)=1\\\ldots \\(m_k,n)=1\\1\leq b_1, b_2,\ldots,b_r\leq n}}(m_1-a_1, m_2-a_2,\ldots,m_k-a_k,b_1,b_2,\cdots,b_r,n)\\ = \phi(n)^k \sum\limits_{\substack{d\mid n}}\frac{(d)^r}{\phi(\frac{n}{d})^{k-1}}
 \end{displaymath}
\item 
\begin{displaymath}
\sum\limits_{\substack{1\leq m_1, m_2,\ldots,m_k\leq n^s\\(m_1,n^s)_s=1\\(m_2,n^s)_s=1\\\cdots \\(m_k,n^s)_s=1}}(m_1-a_1, m_2-a_2,\ldots,m_k-a_k,n^s)_s = \Phi_s(n^s)^k \sum\limits_{\substack{d^s\mid n^s}}\frac{1}{\Phi_s(d^s)^{k-1}}
 \end{displaymath}

\item \begin{displaymath}
\sum\limits_{\substack{m=1\\(m.n^s)_s=1}}^{n^s} (m-1,n^s)_s=\Phi_s(n^s)\tau_s(n^s)
 \end{displaymath}
\end{enumerate}
\end{coro}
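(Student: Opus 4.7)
The plan is to derive each of the three identities as a direct specialisation of Theorem \ref{li-idnty}, so no genuinely new argument is needed; the corollary is purely a collection of interesting special cases.

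For part (1), I would set $s = 1$ throughout Theorem \ref{li-idnty}. Under this substitution, $\Phi_1 = \phi$, the generalised gcd $(\cdot,\cdot)_1$ coincides with the ordinary gcd, $n^1 = n$, $d^1 = d$, and the hypothesis $(a_i, n^s)_s = 1$ becomes the usual $(a_i, n) = 1$. The identity of Theorem \ref{li-idnty} then reproduces the claimed formula verbatim.

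For part (2), I would specialise to $r = 0$, so that the $b_j$-variables disappear and the factor $(d^s)^r$ reduces to $1$. Theorem \ref{li-idnty} then yields
\begin{equation*}
\sum (m_1 - a_1, \ldots, m_k - a_k, n^s)_s \;=\; \Phi_s(n^s)^k \sum_{d^s \mid n^s} \frac{1}{\Phi_s(n^s/d^s)^{k-1}}.
\end{equation*}
To match the form stated in the corollary, I would reindex through the involution $d \mapsto n/d$ on the divisors of $n$: since $d^s \mid n^s$ iff $d \mid n$, and $n^s/d^s = (n/d)^s$, this is a bijection of the index set $\{d^s : d^s \mid n^s\}$ with itself. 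Consequently $\sum_{d^s \mid n^s} 1/\Phi_s(n^s/d^s)^{k-1} = \sum_{d^s \mid n^s} 1/\Phi_s(d^s)^{k-1}$, which is the desired RHS.

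For part (3), I would choose $k = 1$, $r = 0$ and $a_1 = 1$. The LHS immediately collapses to $\sum_{(m,n^s)_s = 1,\, 1 \le m \le n^s}(m-1, n^s)_s$, while on the RHS $\Phi_s(n^s)^k = \Phi_s(n^s)$, the exponent $k - 1 = 0$ makes the denominator trivially $1$, and $\sum_{d^s \mid n^s} 1 = \tau_s(n^s)$ by definition, yielding $\Phi_s(n^s)\tau_s(n^s)$. There is no genuine obstacle in this proof; the only point that needs a moment's verification is the divisor involution used in part (2), which is routine.
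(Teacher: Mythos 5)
Your proposal is correct and follows exactly the route the paper intends: the paper offers no separate proof, merely asserting that these identities ``can be easily deduced'' by specializing Theorem \ref{li-idnty} ($s=1$ for part (1), $r=0$ for part (2), and $k=1$, $r=0$, $a_1=1$ for part (3)). Your treatment in fact supplies the one detail the paper glosses over, namely the divisor involution $d\mapsto n/d$ (valid since $d^s\mid n^s$ iff $d\mid n$) needed to pass from $\sum_{d^s\mid n^s}\Phi_s(n^s/d^s)^{-(k-1)}$ to $\sum_{d^s\mid n^s}\Phi_s(d^s)^{-(k-1)}$ in part (2).
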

\section{An alternating way of defining $\Phi_s$ and extending it further}

In \cite{tarna2015} M. T\u{a}rn\u{a}uceanu suggested a new generalization of $\phi$  using elementary concepts in group theory. His generalization was based on the following idea. An element $m \in \Z_n$ is a generator of the group $(\Z_n,+) $ if and only if  $(m,n) = 1$ which is if and only if $o(m) = n = exp(\Z_n)$, where $o(m)$ is the order of the element $m$ and $exp(\Z_n)$ is the exponent of the group $(\Z_n,+)$. Thus $\phi(n)$ is the number of elements of order $n$ in $\Z_n$. That is $\phi(n) = \#\{m\in \Z_n \mid o(m) = exp(\Z_n)\}$. T\u{a}rn\u{a}uceanu extended $\phi$ to an arbitrary finite group $G$ by defining  $\phi(G) = \# \{m\in G \mid o(m) = exp(G)\}$.

 We may adapt this technique for defining the generalization $\Phi_s$ as follows. An $m \in \N$ can be counted in $\Phi_s(n)$ if and only if $1\leq m\leq n$ and $(m,n)_s = 1$. Now $o(m) = \frac{n}{(m,n)} $, and $(m,n)_s = 1$ if and only if $m$ and $n$ do not share any prime factor with power greater than or equal to $s$. That is $(m,n) = p_1^{a_1}p_2^{a_2}\cdots p_r^{a_r}$ with $0 \leq a_i<s$. Here $p_i$ are prime divisors of $n$. Therefore $o(m) = \frac{n}{(m,n)} = \frac{n}{p_1^{a_1}p_2^{a_2}\cdots p_r^{a_r}}$, $0 \leq a_i<s$. By using this fact, we may observe that $\Phi_s(n) = \#\{m\in \Z_n \mid o(m)= \frac{n}{p_1^{a_1}p_2^{a_2}\cdots p_r^{a_r}},p_i^{a_i}\mid n \text{ and }0 \leq a_i<s, i = 1,2,\cdots r \}$. 

Now the extension of $\Phi_s$ can be defined as the following. For any arbitrary finite group $G$, define $\Phi_s(G) = \#\{a\in G \mid o(a) = \frac{exp(G)}{p_1^{a_1}p_2^{a_2}\cdots p_r^{a_r}}, p_i^{a_i}\mid exp(G) \text{ and } 0\leq a_i< s, i = 1,2,\cdots r\}$. With this definition we have the following quick observations. For any finite cyclic group $G$, $\Phi_s(G) = \Phi_s(\#G)$. For relatively prime integers $m$ and $n$, we have $\Phi_s(\Z_m\times \Z_n) = \Phi_s(\Z_{mn})= \Phi_s(mn) $. For $s-$free integers $m$ and $n$, $\Phi_s(\Z_m\times \Z_n) = \Phi_s(\Z_{m})\Phi_s(\Z_{n})$. The last statement follows because
\begin{align*}
\Phi_s(\Z_m\times \Z_n) &= \#\{a\in \Z_m\times \Z_n \mid o(a) = \frac{exp(\Z_m\times \Z_n)}{p_1^{a_1}p_2^{a_2}\cdots p_k^{a_k}},\\&\text{ }p_i^{a_i}\mid exp(\Z_m\times \Z_n) \text{ and } 0\leq a_i<s, i = 1,2,\cdots k \}\\&= \#\{a\in \Z_m\times \Z_n \mid o(a) = \frac{lcm(m,n)}{p_1^{a_1}p_2^{a_2}\cdots p_k^{a_k}},\\&\text{ }p_i^{a_i}\mid lcm(m,n) \text{ and } 0\leq a_i<s, i = 1,2,\cdots k \}
\end{align*}
Note that if $m$ and $n$ are $s-$free then $lcm(m,n)$ is also an $s-$free integer. We have $\Phi_s(\Z_m\times \Z_n) = \#\{a\in \Z_m\times \Z_n : o(a) = d, \text{ where }d\mid lcm(m,n) \} = mn = \Phi_s(\Z_m)\times \Phi_s(\Z_n)$. It is not very difficult to deduce the following general statement. For $s-$free integers $m_1,m_2,\cdots ,m_k$, $\Phi_s(\Z_{m_1}\times \Z_{m_2}\times \Z_{m_k}) = \Phi_s(\Z_{m_1})\Phi_s(\Z_{m_2})\cdots \Phi_s(\Z_{m_k})$.

\section{Further directions}
Since we feel that this is the first time Menon's identity is revisited through the generalized gcd concept, it would be interesting to see what possible results can be obtained if one tries to apply our techniques to other generalizations of the identity. We note that we have investivated how does the identity of Zhao and Cao in \cite{zhao2017another} change if one uses the generalized gcd, $\Phi_s$ and $\tau_s$ in a recent (unpublished) work. We expect our generalization to have interesting consequences in group theory considering the definition of $\Phi_s$ we gave in the previous section.

\section{Acknowledgements}
We would like to thank the reviewer for suggesting the possible generalization of Li-Kim identity before which our attention was restricted to giving a generalization of the Menon-Sury identity alone.

The first author thanks the University Grants Commission of India for providing financial support for carrying out research work through their Junior Research Fellowship (JRF) scheme. The second author thanks the Kerala State Council for Science,Technology and Environment, Thiruvananthapuram, Kerala, India for providing financial support  for carrying out research work.

%\bibliographystyle{plain}
%\bibliography{references} 

\end{document}